\newtheorem{theorem}{Theorem}
\newtheorem{corollary}[theorem]{Corollary}
\newtheorem{definition}[theorem]{Definition}
\newtheorem{lemma}[theorem]{Lemma}
\newtheorem{proposition}[theorem]{Proposition}
\newtheorem{remark}[theorem]{Remark}
\newenvironment{proof}[1][Proof]{\noindent\textbf{#1.} }{\ \rule{0.5em}{0.5em}}
\begin{document}

\begin{center}
{\LARGE Folding, Cycles and Chaos in Planar Systems}

\medskip

\centerline{H. SEDAGHAT \footnote{Department of Mathematics, Virginia Commonwealth University Richmond, Virginia, 23284-2014, USA; Email: hsedagha@vcu.edu}}
\end{center}

\vspace{2ex}


\begin{abstract}
A typical planar system of difference equations can be folded or transformed
into a scalar difference equation of order two plus a passive (non-dynamic)
equation. We discuss this method and its application to identifying and
proving the existence as well as nonexistence of cycles and chaos in a number of systems of
rational difference equations with variable coefficients. These include some
systems that converge to autonomous systems and some that do not; e.g.,
systems with periodic coefficients.

\end{abstract}

\bigskip

\section{Introduction}

It is broadly known that in discrete systems periodic and chaotic behavior may
occur for maps of the interval and other one dimensional manifolds. Planar
difference systems, which generalize interval maps to two-dimensions are also
known to have this feature but they are not
as well-understood. It is by no means simple to prove whether a given planar
map has cycles or exhibits chaos. Certain global results e.g., the Sharkovski
ordering of cycles, are not true for planar maps in general (e.g., the
occurrence of a 3-cycle does not imply the existence of any other cycle). There
are comparatively few methods (e.g., Marotto's snap-back repeller criterion in
\cite{Mar}) that are applicable widely to the study of cycles and chaos in
planar systems.

In this paper we use the new method of folding to explore planar systems and
their orbits. This method has been in use (though not by this name) in
different contexts in the literature. Folding linear systems in both
continuous and discrete time is seen in control theory; the \textquotedblleft
controllability canonical form" is precisely the folding of a controllability
matrix into a linear higher order equation, whether in continuous or discrete
time; see, e.g., \cite{Elay}, \cite{Lasl}. In an entirely different line of
research, in \cite{Eich} a variety of nonlinear differential systems
displaying chaotic behavior are studied and classified by converting them to
ordinary differential equations of order 3 that define jerk (or jolt)
functions, i.e., time rates of change of acceleration.

These ideas in control theory and in chaotic differential systems are special
instances of the same concept, namely, folding systems to equations. In
\cite{SF} these and similar notions are unified by means of a new algorithmic
process for folding difference or differential systems to scalar equations.

In the case of planar systems, folding yields a second-order scalar difference
equation whose analysis provides useful information about the orbits of the
original system in cases where standard methods are unavailing. We establish
the existence or nonexistence of cycles and chaos for various rational planar
systems. Further, since in principle folding applies to nonautonomous systems
in the same way that it does to the autonomous ones, time-dependent parameters
are considered in this study. But the systems that we study here exhibit
cycles and chaos even with constant parameters.

\section{Folding difference systems}

The material in this section comes from \cite{SF}. A (recursive, or explicit)
system of two first-order difference equations is typically defined as%
\begin{equation}
\left\{
\begin{array}
[c]{c}%
x_{n+1}=f(n,x_{n},y_{n})\\
y_{n+1}=g(n,x_{n},y_{n})
\end{array}
\right.  \quad n=0,1,2,\ldots\label{s1}%
\end{equation}
where $f,g:\mathbb{N}_{0}\times D\rightarrow S$ are given functions,
$\mathbb{N}_{0}=\{0,1,2,\ldots\}$ is the set of non-negative integers, $S$ a
nonempty set and $D\subset S\times S$. If $S$ is a subset of the set
$\mathbb{R}$ of the real numbers with the usual topology then (\ref{s1}) is a
\textit{planar system}.

An initial point $(x_{0},y_{0})\in D$ generates a (forward) orbit or solution
$\{(x_{n},y_{n})\}$ of (\ref{s1}) in the state-space $S\times S$ through the
iteration of the function
\[
(n,x_{n},y_{n})\rightarrow(n+1,f(n,x_{n},y_{n}),g(n,x_{n},y_{n})):\mathbb{N}%
_{0}\times D\rightarrow\mathbb{N}_{0}\times S\times S
\]
for as long as the points $(x_{n},y_{n})$ remain in $D.$ If (\ref{s1}) is
\textit{autonomous}, i.e., the functions $f,g$ do not depend on the index $n$
then $(x_{n},y_{n})=F^{n}(x_{0},y_{0})$ for every $n$ where $F^{n}$ denotes
the composition of the map $F(u,v)=(f(u,v),g(u,v))$ of $S\times S$ with itself
$n$ times.

A second-order, scalar difference equation in $S$ is defined as%
\begin{equation}
s_{n+2}=\phi(n,s_{n},s_{n+1}),\quad n=0,1,2,\ldots\label{e1}%
\end{equation}
where $\phi:\mathbb{N}_{0}\times D^{\prime}\rightarrow S$ is a given function
and $D^{\prime}\subset S\times S$. A pair of initial values $s_{0},s_{1}\in S$
generates a (forward) solution $\{s_{n}\}$ of (\ref{e1}) in $S$ if
$(s_{0},s_{1})\in D^{\prime}.$ If $\phi(n,u,v)=\phi(u,v)$ is independent of
$n$ then (\ref{e1}) is autonomous.

An equation of type (\ref{e1}) may be \textquotedblleft unfolded" to a system
of type (\ref{s1}) in a standard way; e.g.,%
\begin{equation}
\left\{
\begin{array}
[c]{l}%
s_{n+1}=t_{n}\\
t_{n+1}=\phi(n,s_{n},t_{n})
\end{array}
\right.  \label{s2}%
\end{equation}

In (\ref{s2}) the temporal delay in (\ref{e1}) is converted to an additional
variable in the state space. All solutions of (\ref{e1}) are reproduced from
the solutions of (\ref{s2}) in the form $(s_{n},s_{n+1})=(s_{n},t_{n})$ so in
this sense, higher order equations may be considered to be special types of
systems. In general, (\ref{e1}) may be unfolded in different ways into systems
of two equations and (\ref{s2}) is not unique.

\begin{definition}
\label{smsl}Let $S$ be a nonempty set and consider a function $f:\mathbb{N}%
_{0}\times D\rightarrow S$ where $D\subset S\times S$. Then $f$ is
\textbf{semi-invertible} or \textbf{partially invertible} if there are sets
$M\subset D$, $M^{\prime}\subset S\times S$ and a function $h:\mathbb{N}%
_{0}\times M^{\prime}\rightarrow S$ such that for all $(u,v)\in M$ if
$w=f(n,u,v)$ then $(u,w)\in M^{\prime}$ and $v=h(n,u,w)$ for all
$n\in\mathbb{N}_{0}$.
\end{definition}

The function $h$ above may be called a semi-inversion, or partial inversion of
$f.$ If $f$ is independent of $n$ then $n$ is dropped from the above notation.

Semi-inversion refers more accurately to the \textit{solvability} of the
equation $w-f(n,u,v)=0$ for $v$ which recalls the implicit function theorem
(see \cite{SF}). On the other hand, a substantial class of semi-invertible
functions is supplied (globally) by the following idea.

\begin{definition}
(Separability)\ Let $(G,\ast)$ be a nontrivial group and let $f:\mathbb{N}%
_{0}\times G\times G\rightarrow G$. If there are functions $f_{1}%
,f_{2}:\mathbb{N}_{0}\times G\rightarrow G$ such that
\[
f(n,u,v)=f_{1}(n,u)\ast f_{2}(n,v)
\]
for all $u,v\in G$ and every $n\geq1$ then we say that $f$ is
\textbf{separable} on $G$ and write $f=f_{1}\ast f_{2}$ for short.
\end{definition}

Every affine function $f(n,u,v)=a_{n}u+b_{n}v+c_{n}$ with real parameters
$a_{n},b_{n},c_{n}$ is separable on $\mathbb{R}$ relative to ordinary addition
for all $n$ with, e.g., $f_{1}(n,v)=a_{n}u$ and $f_{2}(n,v)=b_{n}v+c_{n}.$
Similarly, $f(n,u,v)=a_{n}u/v$ is separable on $\mathbb{R}\backslash\{0\}$
relative to ordinary multiplication.

Now, suppose that $f_{2}(n,\cdot)$ is a bijection for every $n$ and
$f_{2}^{-1}(n,\cdot)$ is its inverse for each $n$; i.e., $f_{2}(n,f_{2}%
^{-1}(n,v))=v$ and $f_{2}^{-1}(n,f_{2}(n,v))=v$ for all $v.$ A separable
function $f$ is semi-invertible if the component function $f_{2}(n,\cdot)$ is
a bijection for each fixed $n$, since for every $u,v,w\in G$
\[
w=f_{1}(n,u)\ast f_{2}(n,v)\Rightarrow v=f_{2}^{-1}(n,[f_{1}(n,u)]^{-1}\ast
w)
\]
where map inversion and group inversion, both denoted by $-1,$ are
distinguishable from the context. In this case, an explicit expression for the
semi-inversion $h$ exists globally as%
\begin{equation}
h(n,u,w)=f_{2}^{-1}(n,[f_{1}(n,u)]^{-1}\ast w) \label{spsh}%
\end{equation}
with $M=M^{\prime}=G\times G.$ We summarize this observation as follows.

\begin{proposition}
\label{sep}Let $(G,\ast)$ be a nontrivial group and $f=f_{1}\ast f_{2}$ be
separable$.$ If $f_{2}(n,\cdot)$ is a bijection for each $n$ then $f$ is
semi-invertible on $G\times G$ with a semi-inversion uniquely defined by
(\ref{spsh}).
\end{proposition}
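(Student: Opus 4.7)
The plan is to verify Definition \ref{smsl} directly by algebraically solving the equation $w = f(n,u,v)$ for $v$ using the group structure. Since separability decouples the dependence on $u$ and $v$ multiplicatively in $(G,\ast)$, and since $f_2(n,\cdot)$ is bijective, the equation should admit a unique solution for $v$ in terms of $n,u,w$, and that solution will be exactly the expression (\ref{spsh}).

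Concretely, I would start from the identity $w = f_1(n,u) \ast f_2(n,v)$. Left-multiplying both sides by the group inverse $[f_1(n,u)]^{-1}$ and invoking associativity together with the identity/inverse axioms isolates $f_2(n,v)$: one obtains $[f_1(n,u)]^{-1} \ast w = f_2(n,v)$. At this point the bijectivity hypothesis supplies a well-defined map inverse $f_2^{-1}(n,\cdot) : G \to G$, and applying it yields $v = f_2^{-1}(n, [f_1(n,u)]^{-1} \ast w)$, which is precisely the proposed $h(n,u,w)$ of (\ref{spsh}). Taking $M = M' = G \times G$, this shows that whenever $w = f(n,u,v)$ one has $(u,w) \in M'$ and $v = h(n,u,w)$, matching Definition \ref{smsl} verbatim.

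For uniqueness, I would observe that any other semi-inversion $h'$ must, by definition, satisfy $v = h'(n,u,w)$ whenever $w = f(n,u,v)$. Since $f_2(n,\cdot)$ is a bijection, for each fixed $(n,u,w)$ the element $v$ solving $w = f_1(n,u) \ast f_2(n,v)$ is uniquely determined, so $h'(n,u,w)$ must coincide with the explicit expression above on all of $G \times G$. There is no serious obstacle here; the only point worth flagging is that (\ref{spsh}) mixes two distinct notions of inverse, namely the group inverse of $f_1(n,u)$ and the functional inverse of $f_2(n,\cdot)$, and it is precisely the global bijectivity of $f_2(n,\cdot)$ that permits $f_2^{-1}(n,\cdot)$ to be defined on all of $G$, justifying the choice $M' = G \times G$ rather than a proper subset.
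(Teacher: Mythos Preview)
Your proposal is correct and follows essentially the same approach as the paper: the paper's argument (given in the paragraph immediately preceding the proposition) also starts from $w=f_{1}(n,u)\ast f_{2}(n,v)$, cancels $f_{1}(n,u)$ on the left, applies $f_{2}^{-1}(n,\cdot)$, and takes $M=M'=G\times G$, noting the distinction between group and map inverses. Your treatment is slightly more explicit about uniqueness and the role of bijectivity in choosing $M'=G\times G$, but the underlying argument is identical.
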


If $a_{n}\not =0$ (or $b_{n}\not =0$) for all $n$ then the separable function
$f(n,u,v)=a_{n}u+b_{n}v+c_{n}$ is semi-invertible as it can readily be solved
for $u$ (or $v$). If $a_{n},b_{n}$ are both zero for infinitely $n$ then $f$
is separable but not semi-invertible for either $u$ or $v$.

Now, suppose that $\{(x_{n},y_{n})\}$ is an orbit of (\ref{s1}) in $D$. If one
of the component functions in (\ref{s1}), say, $f$ is semi-invertible then by
Definition \ref{smsl} there is a set $M\subset D$, a set $M^{\prime}\subset
S\times S$ and a function $h:\mathbb{N}_{0}\times M^{\prime}\rightarrow S$
\ such that if $(x_{n},y_{n})\in M$ then $(x_{n},x_{n+1})=(x_{n}%
,f(n,x_{n},y_{n}))\in M^{\prime}$ and $y_{n}=h(n,x_{n},x_{n+1}).$ Therefore,
\begin{equation}
x_{n+2}=f(n+1,x_{n+1},y_{n+1})=f(n+1,x_{n+1},g(n,x_{n},y_{n}))=f(n+1,x_{n+1}%
,g(n,x_{n},h(n,x_{n},x_{n+1}))) \label{md}%
\end{equation}
and the function%
\begin{equation}
\phi(n,u,w)=f(n+1,w,g(n,u,h(n,u,w))) \label{se2}%
\end{equation}
is defined on $\mathbb{N}_{0}\times M^{\prime}.$ If $\{s_{n}\}$ is the
solution of (\ref{e1}) with initial values $s_{0}=x_{0}$ and $s_{1}%
=x_{1}=f(0,x_{0},y_{0})$ and $\phi$ defined by (\ref{se2}) then
\[
s_{2}=f(1,s_{1},g(0,s_{0},h(0,s_{0},s_{1})))=f(1,x_{1},g(0,x_{0}%
,h(0,x_{0},x_{1})))=f(1,x_{1},g(0,x_{0},y_{0}))=x_{2}%
\]

By induction, $s_{n}=x_{n}$ and thus $h(n,s_{n},s_{n+1})=h(n,x_{n}%
,x_{n+1})=y_{n}.$ It follows that
\begin{equation}
(x_{n},y_{n})=(s_{n},h(n,s_{n},s_{n+1})) \label{xy}%
\end{equation}
i.e., the solution $\{(x_{n},y_{n})\}$ of (\ref{s1}) can be obtained from a
solution $\{s_{n}\}$ of (\ref{e1}) via (\ref{xy}). Thus the following is true.

\begin{theorem}
\label{A}Suppose that $f$ in (\ref{s1}) is semi-invertible with $M,M^{\prime}$
and $h$ as in Definition \ref{smsl}. Then each orbit of (\ref{s1}) in $M$ may
be derived from a solution of (\ref{e1}) via (\ref{xy}) with $\phi$ given by
(\ref{se2}).
\end{theorem}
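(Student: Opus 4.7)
The plan is to start from an arbitrary orbit $\{(x_n,y_n)\}$ of (\ref{s1}) that remains in $M$, and show that its first component satisfies the scalar equation (\ref{e1}) with $\phi$ defined by (\ref{se2}); the second component is then recovered via the semi-inversion $h$.

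First I would apply the hypothesis on $f$ pointwise along the orbit: since $(x_n,y_n)\in M$ and $x_{n+1}=f(n,x_n,y_n)$, Definition \ref{smsl} gives $(x_n,x_{n+1})\in M'$ and the identity $y_n=h(n,x_n,x_{n+1})$. This isolates $y_n$ as an explicit function of two consecutive $x$-terms, which is the key step that eliminates the auxiliary variable from the system.

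Next I would substitute this expression back into the system. Writing $x_{n+2}=f(n+1,x_{n+1},y_{n+1})$ and using $y_{n+1}=g(n,x_n,y_n)=g(n,x_n,h(n,x_n,x_{n+1}))$, I recover the chain (\ref{md}); comparison with (\ref{se2}) shows that $x_{n+2}=\phi(n,x_n,x_{n+1})$, so $\{x_n\}$ is a solution of (\ref{e1}). Conversely, letting $\{s_n\}$ denote the solution of (\ref{e1}) with the matched initial data $s_0=x_0$ and $s_1=x_1=f(0,x_0,y_0)$, a short induction (whose base case is exhibited in the excerpt and whose inductive step uses only the definitions of $\phi$ and $h$) yields $s_n=x_n$ for all $n$. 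Substituting into $y_n=h(n,x_n,x_{n+1})$ then gives $y_n=h(n,s_n,s_{n+1})$, which is exactly (\ref{xy}).

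The only genuine obstacle is a bookkeeping one, not a conceptual one: at each step of the induction one must verify that $(x_n,x_{n+1})$ actually lies in the domain $M'$ of $\phi$ and $h$, since otherwise the identities $y_n=h(n,x_n,x_{n+1})$ and $x_{n+2}=\phi(n,x_n,x_{n+1})$ would be vacuous. This is built into the hypothesis: because the orbit is assumed to stay in $M$, the semi-invertibility clause automatically places each pair $(x_n,x_{n+1})$ in $M'$, so the scalar equation (\ref{e1}) remains well defined at every index and the derivation goes through without interruption.
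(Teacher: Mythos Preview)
Your proposal is correct and follows essentially the same route as the paper: apply semi-invertibility along the orbit to write $y_{n}=h(n,x_{n},x_{n+1})$, substitute into $x_{n+2}=f(n+1,x_{n+1},g(n,x_{n},y_{n}))$ to obtain (\ref{md}) and hence $x_{n+2}=\phi(n,x_{n},x_{n+1})$, then match initial data and induct to get $s_{n}=x_{n}$ and finally (\ref{xy}). Your explicit remark that the hypothesis ``the orbit lies in $M$'' is what keeps each pair $(x_{n},x_{n+1})$ inside $M'$ (so that $h$ and $\phi$ remain defined at every step) is a useful clarification that the paper leaves implicit.
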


The following gives a name to the pair of equations that generate the
solutions of (\ref{s1}) in the above theorem.

\begin{definition}
\label{fld}(Folding) The pair of equations%
\begin{align}
s_{n+2}  &  =\phi(n,s_{n},s_{n+1}),\quad\quad\text{(core)}\label{cor}\\
y_{n}  &  =h(n,x_{n},x_{n+1})\qquad\text{(passive)} \label{pas}%
\end{align}
where $\phi$ is defined by (\ref{se2}) is a folding of the system (\ref{s1}).
The initial values of the core equation are determined from the initial point
$(x_{0},y_{0})$ as $s_{0}=x_{0}$, $s_{1}=f(0,x_{0},y_{0}).$
\end{definition}

We call Equation (\ref{pas}) \textit{passive} because it simply evaluates the
function $h$ on a solution of the core equation (\ref{cor}) --no dynamics or
iterations are involved. Also observe that (\ref{s1}) may be considered an
unfolding of the second-order equation (\ref{cor}) that is generally not
equivalent to the standard unfolding (\ref{s2}) of that equation.

If one of the component functions in the system is separable then a global
result is readily obtained from Theorem \ref{A}\ using (\ref{spsh}).

\begin{corollary}
\label{sprd}Let $(G,\ast)$ be a nontrivial group and $f=f_{1}\ast f_{2}$ be
separable on $G\times G$. If $f_{2}(n,\cdot)$ is a bijection for every $n$
then (\ref{s1}) folds to%
\begin{align}
s_{n+2}  &  =f_{1}(n+1,s_{n+1},g(n,s_{n},f_{2}^{-1}(n,[f_{1}(n,s_{n}%
)]^{-1}\ast s_{n+1}))\label{sde1}\\
y_{n}  &  =f_{2}^{-1}(n,[f_{1}(n,s_{n})]^{-1}\ast s_{n+1}) \label{ysp}%
\end{align}
Each orbit $\{(x_{n},y_{n})\}$ of (\ref{s1}) in $G\times G$ is obtained from a
solution $\{s_{n}\}$ of (\ref{sde1}) with the initial values $s_{0}%
=x_{0},\ s_{1}=f_{1}(0,x_{0})\ast f_{2}(0,y_{0}).$
\end{corollary}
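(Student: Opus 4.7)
The plan is to derive the corollary as a direct consequence of Theorem \ref{A} together with Proposition \ref{sep}, essentially by substitution. First I would invoke Proposition \ref{sep}: the standing hypothesis that $f = f_1 \ast f_2$ is separable on $G \times G$ with $f_2(n,\cdot)$ a bijection for each $n$ is precisely what is needed to conclude that $f$ is semi-invertible on $G \times G$, with the explicit semi-inversion given by (\ref{spsh}),
\[
h(n,u,w) = f_2^{-1}(n,[f_1(n,u)]^{-1} \ast w),
\]
and with $M = M' = G \times G$. In particular every orbit of (\ref{s1}) with initial point in $G \times G$ lies in $M$, so no side condition on orbits is needed.

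Next, I would apply Theorem \ref{A}. It tells us that each orbit $\{(x_n,y_n)\}$ of (\ref{s1}) is recovered via (\ref{xy}) from a solution $\{s_n\}$ of the core equation $s_{n+2} = \phi(n,s_n,s_{n+1})$, where $\phi$ is produced by (\ref{se2}):
\[
\phi(n,u,w) = f(n+1, w, g(n, u, h(n,u,w))).
\]
Now I would substitute the explicit form of $h$ into the innermost argument of $g$ and then use $f = f_1 \ast f_2$ on the outside; this produces the right-hand side of (\ref{sde1}). The passive equation $y_n = h(n, x_n, x_{n+1})$ of (\ref{pas}) becomes (\ref{ysp}) after the same substitution.

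Finally, I would verify the initial conditions specified by Definition \ref{fld}: $s_0 = x_0$ and $s_1 = f(0,x_0,y_0)$, the latter rewriting as $s_1 = f_1(0,x_0) \ast f_2(0,y_0)$ since $f$ is separable.

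There is no genuine obstacle here beyond careful bookkeeping; the mathematical content has already been absorbed into Proposition \ref{sep} and Theorem \ref{A}, and the corollary is simply the composition of the two. The only point one must be attentive to is keeping the inversion in (\ref{spsh}) straight — namely, that $[f_1(n,s_n)]^{-1}$ refers to the group inverse in $G$ while $f_2^{-1}(n,\cdot)$ refers to the functional inverse of $f_2(n,\cdot)$, as the paper emphasizes just before stating Proposition \ref{sep}.
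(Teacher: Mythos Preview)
Your proposal is correct and matches the paper's own approach: the paper does not give a standalone proof of this corollary, simply remarking that it is ``readily obtained from Theorem \ref{A} using (\ref{spsh}),'' which is exactly the combination of Proposition \ref{sep} and Theorem \ref{A} that you carry out by substitution.
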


The next result is a special case of Corollary \ref{sprd}.

\begin{corollary}
\label{sl}Let $a_{n},b_{n},c_{n}$ be sequences in a ring $R$ with identity and
let $g:\mathbb{N}_{0}\times R\times R\rightarrow R.$ If $b_{n}$ is a unit in
$R$ for all $n$ then the semilinear system
\begin{equation}
\left\{
\begin{array}
[c]{l}%
x_{n+1}=a_{n}x_{n}+b_{n}y_{n}+c_{n}\\
y_{n+1}=g(n,x_{n},y_{n})
\end{array}
\right.  \label{sls}%
\end{equation}
folds to%
\begin{align}
s_{n+2}  &  =c_{n+1}+a_{n+1}s_{n+1}+b_{n+1}g(n,s_{n},b_{n}^{-1}(s_{n+1}%
-a_{n}s_{n}-c_{n}))\label{sde2}\\
y_{n}  &  =b_{n}^{-1}(s_{n+1}-a_{n}s_{n}-c_{n})\nonumber
\end{align}

Each orbit $\{(x_{n},y_{n})\}$ of (\ref{sls}) in $R$ is obtained from a
solution $\{s_{n}\}$ of (\ref{sde2}) with the initial values $s_{0}%
=x_{0},\ s_{1}=a_{0}x_{0}+b_{0}y_{0}+c_{0}$.
\end{corollary}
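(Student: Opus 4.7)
The plan is to derive this as a direct instance of Corollary~\ref{sprd} applied to the additive group $(R,+)$ of the ring. First I would exhibit the required separable structure for the $x$-equation: setting $f(n,u,v)=a_{n}u+b_{n}v+c_{n}$, I would write $f=f_{1}+f_{2}$ with $f_{1}(n,u)=a_{n}u$ and $f_{2}(n,v)=b_{n}v+c_{n}$. Since $R$ is a ring with identity, $(R,+)$ is a (commutative) group, and the decomposition $f=f_{1}+f_{2}$ is the separability hypothesis of Corollary~\ref{sprd} relative to the operation $\ast=+$.

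Next I would verify the bijectivity of $f_{2}(n,\cdot)$, which is the only nontrivial hypothesis of Corollary~\ref{sprd}. This is precisely where the unit assumption on $b_{n}$ enters: the map $v\mapsto b_{n}v+c_{n}$ has the explicit two-sided inverse $w\mapsto b_{n}^{-1}(w-c_{n})$, so $f_{2}^{-1}(n,w)=b_{n}^{-1}(w-c_{n})$. With this in place, Corollary~\ref{sprd} applies and the fold is produced automatically.

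The remainder is bookkeeping: substitute the chosen $f_{1},f_{2},f_{2}^{-1}$ into the templates (\ref{sde1}) and (\ref{ysp}). In the additive group the inverse $[f_{1}(n,s_{n})]^{-1}$ is simply $-a_{n}s_{n}$, so $[f_{1}(n,s_{n})]^{-1}\ast s_{n+1}=s_{n+1}-a_{n}s_{n}$; applying $f_{2}^{-1}(n,\cdot)$ gives the passive equation $y_{n}=b_{n}^{-1}(s_{n+1}-a_{n}s_{n}-c_{n})$. Plugging this expression into the core template yields $s_{n+2}=a_{n+1}s_{n+1}+b_{n+1}g(n,s_{n},y_{n})+c_{n+1}$, which is exactly (\ref{sde2}), and the initial conditions quoted in Corollary~\ref{sprd} specialize to $s_{0}=x_{0}$ and $s_{1}=a_{0}x_{0}+b_{0}y_{0}+c_{0}$. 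There is no genuine obstacle; the only care required is to translate the abstract group operation $\ast$ and its inverse into the additive operations of the ring, and to invoke the unit hypothesis at the one point where a bijection is needed.
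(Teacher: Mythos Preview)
Your proposal is correct and is precisely the approach the paper takes: it states that this corollary is a special case of Corollary~\ref{sprd}, and earlier in the text (just after the definition of separability) it even exhibits the same decomposition $f_{1}(n,u)=a_{n}u$, $f_{2}(n,v)=b_{n}v+c_{n}$ that you use. Your verification that the unit hypothesis on $b_{n}$ is exactly what makes $f_{2}(n,\cdot)$ a bijection, and your substitution into the templates (\ref{sde1})--(\ref{ysp}), fill in the routine details the paper omits.
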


A natural question after folding a system is whether the qualitative
properties of the solutions of the core equation (\ref{cor}) are shared by the
orbits of (\ref{s1}). The answer clearly depends on the passive equation so
that despite its non-dynamic nature, (\ref{pas}) plays a nontrivial role in
the folding. The next result illustrates this feature and is used in the next section.

\begin{lemma}
\label{per}Assume that the semi-inversion $h$ in (\ref{pas}) has\ period
$p\geq1$, i.e., $p$ is the least positive integer such that
$h(n+p,u,w)=h(n,u,w)$ for all $(n,u,w)\in\mathbb{N}_{0}\times M^{\prime}.$ Let
$\{s_{n}\}$ be a solution of (\ref{cor}) with initial values $s_{0}=x_{0}$,
$s_{1}=f(0,x_{0},y_{0})$.

(a) If $\{s_{n}\}$ is periodic with\ period $q\geq1$ then the corresponding
orbit $\{(x_{n},y_{n})\}$ of (\ref{s1}) is periodic with period equal to the
least common multiple $\operatorname{lcm}(p,q)$.

(b) If $\{s_{n}\}$ is non-periodic then $\{(x_{n},y_{n})\}$ is non-periodic.
\end{lemma}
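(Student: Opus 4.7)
The plan is to exploit the explicit representation $(x_n, y_n) = (s_n, h(n, s_n, s_{n+1}))$ from equation~(\ref{xy}) and Theorem~\ref{A}, which makes each term of the orbit a pointwise deterministic function of the core solution $\{s_n\}$ and of the time parameter appearing in $h$. Both periodicities in play---$p$ from $h$ and $q$ from $\{s_n\}$---enter only through this formula, so the result should follow by elementary divisibility arithmetic once the formula is in hand.

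For part (a), I would set $r = \operatorname{lcm}(p, q)$ and first check that $r$ is a period of the orbit: since $q \mid r$ the shifts $s_{n+r} = s_n$ and $s_{n+r+1} = s_{n+1}$ both hold, so $x_{n+r} = x_n$; and since $p \mid r$ I then obtain $y_{n+r} = h(n+r, s_n, s_{n+1}) = h(n, s_n, s_{n+1}) = y_n$. For minimality, let $T$ denote the least period of the orbit. Projection onto the first coordinate yields $s_{n+T} = s_n$ for every $n$, so by minimality of $q$ we get $q \mid T$. Using $s_{n+T}=s_n$ inside the expression for $y$ then produces the orbit-identity $h(n+T, s_n, s_{n+1}) = h(n, s_n, s_{n+1})$, from which minimality of $p$ for $h$ should yield $p \mid T$; combined with $q \mid T$ and the bound $T \leq r$, this forces $T = r$.

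Part (b) I would handle by contrapositive: if the orbit $\{(x_n, y_n)\}$ admitted any period $T$, then $x_{n+T}=x_n$ would give $s_{n+T} = s_n$ for every $n$, so $\{s_n\}$ would be periodic, contradicting the hypothesis.

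The main obstacle is the step ``$p \mid T$'' in part (a): the identity $h(n+T, s_n, s_{n+1}) = h(n, s_n, s_{n+1})$ only holds along the trajectory rather than for all $(u, w) \in M'$, so the stated minimality of $p$ does not transfer verbatim. I would address this either by imposing a mild nondegeneracy on $h$ along the orbit (so that the pairs $(s_n, s_{n+1})$ witness enough of $M'$ to recover $p \mid T$) or, in the applications that this lemma is being set up for, by a direct verification of $p \mid T$ from the explicit algebraic form of $h$ and the finite list of pairs $(s_n, s_{n+1})$ encountered on a $q$-cycle.
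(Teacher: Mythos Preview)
Your approach matches the paper's for the core steps: both use the representation $(x_n,y_n)=(s_n,h(n,s_n,s_{n+1}))$ from (\ref{xy}) to verify that $\operatorname{lcm}(p,q)$ is a period of the orbit, and both handle (b) by the same contrapositive via the first coordinate.

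The difference is that you go further and attempt to establish \emph{minimality}, correctly flagging the obstacle: the identity $h(n+T,s_n,s_{n+1})=h(n,s_n,s_{n+1})$ holds only along the particular trajectory, not for all $(u,w)\in M'$, so the defining minimality of $p$ does not immediately yield $p\mid T$. This concern is legitimate. The paper's own proof does not address minimality at all --- it simply checks $y_{n+\operatorname{lcm}(p,q)}=y_n$ and then asserts that the period is $\operatorname{lcm}(p,q)$, with no argument that no smaller period works. So the gap you spotted is present in the paper as well; the paper either reads ``period'' loosely (as ``a period'' rather than ``the least period'') or tacitly assumes the kind of nondegeneracy you describe. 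Your proposed remedies (imposing nondegeneracy on $h$ along the orbit, or checking directly in the specific rational applications) are sensible and go beyond what the paper supplies.
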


\begin{proof}
(a) Recall that $x_{n}=s_{n}$ so that the sequence $\{x_{n}\}$ of the
x-components of $\{(x_{n},y_{n})\}$ has period $q.$ Also by (\ref{pas})
\[
y_{n+\operatorname{lcm}(p,q)}=h(n+\operatorname{lcm}%
(p,q),x_{n+\operatorname{lcm}(p,q)},x_{n+1+\operatorname{lcm}(p,q)}%
)=h(n,x_{n},x_{n+1})=y_{n}%
\]
since both $p$ and $q$ divide $\operatorname{lcm}(p,q).$ Therefore, the
sequence $\{y_{n}\}$ of the y-components of $\{(x_{n},y_{n})\}$ has period
$\operatorname{lcm}(p,q)$ and it follows that $\{(x_{n},y_{n})\}$ has period
$\operatorname{lcm}(p,q)$.

(b) If $\{(x_{n},y_{n})\}$ is periodic then so is $\{x_{n}\},$ which implies
that $\{s_{n}\}$ is periodic.
\end{proof}

\section{Cycles and chaos in a rational system}

Various definitions of chaos for nonautonoumous systems exist in the
literature. Possibly the most familiar form of deterministic chaos, in the
sense of Li and Yorke, is defined generally as follows.

\begin{definition}
\label{chs}(Li-Yorke Chaos) Let $F_{n}:(X,d)\rightarrow(X,d)$ be functions on
a metric space for all $n\geq0$ and define $F_{0}^{n}=F_{n}\circ F_{n-1}%
\circ\cdots\circ F_{0}$ i.e., the composition of maps $F_{0}$ through $F_{n}.$
The nonautonomous system $(X,F_{n})$ is chaotic if there is an uncountable set
$S\subset X$ (the scrambled set) such that for every pair of points $x,y\in
S$,%
\[
\limsup_{n\rightarrow\infty}d(F_{0}^{n}(x),F_{0}^{n}(y))>0,\qquad
\liminf_{n\rightarrow\infty}d(F_{0}^{n}(x),F_{0}^{n}(y))=0
\]

\end{definition}

For planar maps, $F_{n}(u,v)=(f(n,u,v),g(n,u,v))$ on the Euclidean space
$\mathbb{R}^{2}=\mathbb{R}\times\mathbb{R}$. Despite the similarity of the
above definition to the familiar one for interval maps (autonomous
one-dimensional systems) proving that a particular nonautonomous system is
chaotic in the sense of Definition \ref{chs} is a nontrivial task. For a
continuous interval map the existence of a 3-cycle is sufficient for the
occurrence of Li-Yorke chaos \cite{LY} and for a continuously differentiable
map of $\mathbb{R}^{N}$ a sufficient condition is the existence of a snap-back
repeller \cite{Mar}. To take advantage of such relatively practical results,
we may consider nonautonomous systems that are tied in some way to an
autonomous one.

One natural case that is frequently studied in the literature concerns
nonautonomous systems where the sequence $\{F_{n}\}$ converges uniformly to a
function $F$ on $X$ so that $(X,F)$ is an autonomous system; see, e.g.,
\cite{BO} and \cite{can} for studies of pertinent issues, including whether
the occurrence of chaos in the autonomous system implies, or is implied by the
same for the nonautonomous one.

We consider a different approach where a nonautonomous system is tied to an
autonomous one through folding. In this section, we study a rational system
that folds to an autonomous, first-order difference equation for its core. In
this case, the nonautonomous system need not converge to an autonomous one;
e.g., the system may have periodic coefficients. The dynamic aspects of the
core equation are not affected by the time-dependent parameters which
influence the orbits of (\ref{s1}) through the passive equation.

Consider the rational system
\begin{subequations}
\label{rh}%
\begin{align}
x_{n+1}  &  =\frac{\alpha_{n}x_{n}+\beta_{n}y_{n}}{A_{n}x_{n}+y_{n}%
}\label{rha}\\
y_{n+1}  &  =\frac{\alpha_{n}^{\prime}x_{n}+\beta_{n}^{\prime}y_{n}}%
{x_{n}+B_{n}y_{n}} \label{rhb}%
\end{align}
where all coefficients are sequences of real numbers. The autonomous version
of the above system, i.e.,
\end{subequations}
\begin{subequations}
\label{arh}%
\begin{align}
x_{n+1}  &  =\frac{\alpha x_{n}+\beta y_{n}}{Ax_{n}+y_{n}}\\
y_{n+1}  &  =\frac{\alpha^{\prime}x_{n}+\beta^{\prime}y_{n}}{x_{n}+By_{n}}%
\end{align}
has been classified as a type (36,36) system in \cite{cklm} when all
coefficients are nonzero (separate number pairs are assigned to special cases
where one or more of the coefficients are zeros). System (\ref{arh}) is
semiconjugate to a first-order rational equation via the substitution of
$r_{n}=x_{n}/y_{n}$ (or the reciprocal of this ratio); see \cite{hsb1} for a
study of semiconjugate systems. We note that (\ref{arh}) is also a homogeneous
system--a generalization of the aforementioned type of semiconjugacy exists
for such systems; see \cite{maz}.

A comprehensive study of (\ref{arh}) appears in \cite{hk} for non-negative
coefficients where the positive quadrant of the plane is invariant under the
action of the underlying planar map. By analyzing the one-dimensional
semiconjugate map, the authors show that exactly one of the following
possibilities occurs: (i) every non-negative solution of (\ref{arh}) converges
to a fixed point, or (ii) there is a unique positive 2-cycle and every
non-negative solution of (\ref{arh}) either converges to this 2-cycle or to a
fixed point of the system, or (iii) there exist unbounded solutions.

When all parameters in a rational system are non-negative, the positive
quadrant $[0,\infty)^{2}$ is invariant and the underlying mapping of the
system is continuous. In the absence of singularities, linear-fractional
equations such as those in (\ref{rh}) tend to behave mildly and not exhibit
the type of complex behavior that is often associated with rapid rates of
change. So questions remain about the nature of the orbits of (\ref{arh}) for
a wider range of parameters, including negative coefficients. Does the system
have cycles of period greater than two? Can it exhibit complex, aperiodic behavior?

With negative parameters the occurrence of singularities (discontinuity)
raises significant existence and boundedness issues for orbits. We use folding
to identify special cases where singularities are avoided and some results are
obtained about (\ref{rh}) and similar systems.

To fold (\ref{rh}) we first solve (\ref{rha}) for $y_{n}$ to find%
\end{subequations}
\begin{equation}
y_{n}=\frac{x_{n}(\alpha_{n}-A_{n}x_{n+1})}{x_{n+1}-\beta_{n}}=h(n,x_{n}%
,x_{n+1}) \label{rhyn}%
\end{equation}

Next, using (\ref{md}) and (\ref{rhb}) we obtain the following
\textit{first-order} core equation:%
\begin{equation}
x_{n+2}=\frac{\alpha_{n+1}(A_{n}B_{n}-1)x_{n+1}^{2}+[\alpha_{n+1}(\beta
_{n}-\alpha_{n}B_{n})+\beta_{n+1}(A_{n}\beta_{n}^{\prime}-\alpha_{n}^{\prime
})]x_{n+1}+\beta_{n+1}(\alpha_{n}^{\prime}\beta_{n}-\alpha_{n}\beta
_{n}^{\prime})}{A_{n+1}(A_{n}B_{n}-1)x_{n+1}^{2}+[A_{n+1}(\beta_{n}-\alpha
_{n}B_{n})+(A_{n}\beta_{n}^{\prime}-\alpha_{n}^{\prime})]x_{n+1}+\alpha
_{n}^{\prime}\beta_{n}-\alpha_{n}\beta_{n}^{\prime}} \label{rhc}%
\end{equation}

The first-order nature of this folding and the semiconjugacy of planar mapping of the
autonomous system (\ref{arh}) to a one dimensional map are evidently related.
However, folding does not require knowledge of a semiconjugate relation or even of
whether such a relation exists.

Equation (\ref{rhc}) does not have complex solutions for all choices of
parameters. For instance, if $A_{n}=\beta_{n}=\beta_{n}^{\prime}=0$ for all
$n$ then (\ref{rhc}) reduces to the affine equation%
\begin{equation}
x_{n+2}=\frac{\alpha_{n+1}}{\alpha_{n}^{\prime}}x_{n+1}+\frac{\alpha
_{n+1}\alpha_{n}B_{n}}{\alpha_{n}^{\prime}} \label{ao1}%
\end{equation}
which does not exhibit complex behavior with constant or even periodic parameters.

To assure the existence of cycles and the occurrence of chaos even in the
autonomous case, we consider a different special case where%
\begin{equation}
A_{n}=\alpha_{n}^{\prime}=\beta_{n}=0,\quad\alpha_{n},\beta_{n}^{\prime}%
\not =0\quad\text{for all }n\geq0 \label{rsc}%
\end{equation}

These conditions are not necessary for the occurrence of cycles or chaos but
we show that they are sufficient. If conditions (\ref{rsc}) hold then
(\ref{rhc}) reduces to the quadratic equation%
\begin{equation}
x_{n+2}=\frac{\alpha_{n+1}}{\alpha_{n}\beta_{n}^{\prime}}x_{n+1}^{2}%
+\frac{\alpha_{n+1}B_{n}}{\beta_{n}^{\prime}}x_{n+1} \label{rhc1}%
\end{equation}

To simplify calculations we also assume that there are constants $a,b$ such
that for all $n,$%
\[
\frac{\alpha_{n+1}}{\alpha_{n}\beta_{n}^{\prime}}=a,\quad\frac{\alpha
_{n+1}B_{n}}{\beta_{n}^{\prime}}=b
\]

Since $\alpha_{n}\not =0$ for all $n,$ we see that $a\not =0$. These equalites
yield%
\begin{equation}
\beta_{n}^{\prime}=\frac{\alpha_{n+1}}{a\alpha_{n}},\quad B_{n}=\frac
{b}{a\alpha_{n}} \label{ac1}%
\end{equation}
with $\alpha_{n}$ unspecified. Under these assumptions, (\ref{rh}) folds to%
\begin{equation}
x_{n+2}=ax_{n+1}^{2}+bx_{n+1},\quad y_{n}=\frac{\alpha_{n}x_{n}}{x_{n+1}}
\label{hsc}%
\end{equation}

By a change of variables $r_{n}=x_{n+1}$ the first-order, autonomous core
equation above may be written as%
\begin{equation}
r_{n+1}=ar_{n}^{2}+br_{n},\quad r_{0}=x_{1}=\frac{\alpha_{0}x_{0}}{y_{0}}
\label{o1q}%
\end{equation}

If $b\not =0$ then the quadratic equation above exhibits complex behavior in
some invariant interval for a range of parameter values. This behavior for the
x-components occurs regardless of the choice of $\alpha_{n}$, and in
particular, when $\alpha_{n}=\alpha$ is constant, i.e., the autonomous case.

Equation (\ref{o1q}) is conjugate to the logistic equation $t_{n+1}%
=bt_{n}(1-t_{n})$ via the substitution $t_{n}=-ar_{n}/b$. It is well-known
that as $b$ goes from 3 to 4 the solutions of the logistic equation in the
invariant interval [0,1] undergo a familiar sequence of bifurcations. In
particular, the logistic equation has a period 3 solution when $b>3.83$ so
(\ref{o1q}) has periodic solutions of all possible periods due to the
Sharkovski ordering and exhibits chaos in [0,1] in the sense of Li and Yorke.
In fact, lower values of $b$ may be used for the onset
of chaos; e.g., $b>3.7$ where a snap-back repeller is born, or even 
$b\geq3.57$ that corresponds to the end of the
period-doubling cascade. These observations lead to the following result.

\begin{theorem}
\label{crh}Consider the system (\ref{rh}) subject to (\ref{rsc}), i.e., the
system
\begin{subequations}
\label{rhsc}%
\begin{align}
x_{n+1}  &  =\frac{\alpha_{n}x_{n}}{y_{n}}\\
y_{n+1}  &  =\frac{\beta_{n}^{\prime}y_{n}}{x_{n}+B_{n}y_{n}}%
\end{align}
Assume also that (\ref{ac1}) holds with $0<b<4$.

(a) If $(x_{0},y_{0})$ is an initial point such that
\end{subequations}
\begin{equation}
\frac{\alpha_{0}x_{0}}{y_{0}}\in\left(  -\frac{b}{a},0\right)  \quad\text{if
}a>0;\quad\frac{\alpha_{0}x_{0}}{y_{0}}\in\left(  0,-\frac{b}{a}\right)
\quad\text{if }a<0 \label{crh1}%
\end{equation}
then the following are true:

\qquad(i) The orbit $\{(x_{n},y_{n})\}$ is well-defined with $y_{n}=\alpha
_{n}x_{n}/x_{n+1}$ and $x_{n}\in(0,-b/a)$ if $a<0$, $x_{n}\in(-b/a,0)$ if
$a>0$ for all $n\geq0.$ Further, the orbit is bounded if $\{\alpha_{n}\}$ is bounded.

\qquad(ii) If $\lim_{n\rightarrow\infty}\alpha_{n}=\alpha\not =0$ and the
solution $\{r_{n}\}$ of (\ref{o1q}) converges to a $q$-cycle then the orbit
$\{(x_{n},y_{n})\}$ converges to a $q$-cycle.

\qquad(iii) If $\{\alpha_{n}\}$ converges to zero then the orbit
$\{(x_{n},y_{n})\}$ converges to a limit set that is contained in the x-axis.
If the solution $\{r_{n}\}$ of (\ref{o1q}) converges to a cycle or is chaotic
then the orbit has the same behavior but the limit set itself does not contain
a solution of (\ref{rhsc}).

\qquad(iv) If $\{\alpha_{n}\}$ converges to a $p$-cycle and the solution
$\{r_{n}\}$ of (\ref{o1q}) converges to a $q$-cycle then the orbit
$\{(x_{n},y_{n})\}$ converges to a cycle with period $\operatorname{lcm}(p,q)$.

\qquad(v) If $\{\alpha_{n}\}$ is bounded and the solution $\{r_{n}\}$ of
(\ref{o1q}) is chaotic (e.g., if $3.83<b<4)$ then the orbit $\{(x_{n}%
,y_{n})\}$ is chaotic.

(b) If $(x_{0},y_{0})$ is such that $\alpha_{0}x_{0}/y_{0}$ does not satisfy
(\ref{crh1}) and $\alpha_{0}x_{0}/y_{0}\not =0,\pm b/a$ then the orbit
$\{(x_{n},y_{n})\}$ is well-defined and unbounded.
\end{theorem}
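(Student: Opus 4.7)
Under (\ref{rsc}) and (\ref{ac1}), the folding (\ref{hsc}) reduces the problem to the autonomous first-order core $r_{n+1}=ar_n^2+br_n$ with $r_n=x_{n+1}$ and $r_0=\alpha_0x_0/y_0$, together with the passive reconstruction $y_n=\alpha_nx_n/x_{n+1}$. The conjugacy $t_n=-ar_n/b$ turns the core into the logistic map $t_{n+1}=bt_n(1-t_n)$, so (\ref{crh1}) is exactly the condition $t_0\in(0,1)$ and $r_0$ lies in the forward-invariant interval of the core. Part (a)(i) then follows directly: forward invariance gives $x_n=r_{n-1}\in(-b/a,0)$ or $(0,-b/a)$ for $n\ge 1$, so $x_n\notin\{0,-b/a\}$; substituting $B_n=b/(a\alpha_n)$ yields
\[
x_n+B_ny_n=\frac{x_n(ax_{n+1}+b)}{ax_{n+1}},
\]
which is nonzero exactly because $x_{n+1}$ avoids $0$ and $-b/a$, so the denominators in (\ref{rhsc}) never vanish, the passive formula is well-defined, and $\{x_n\}$ is bounded by the length $b/|a|$ of the invariant interval. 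Boundedness of $\{y_n\}$ when $\{\alpha_n\}$ is bounded then reduces to controlling $x_{n+1}$ away from $0$.

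For parts (ii)-(iv), I would deduce everything from the passive formula combined with (the asymptotic form of) Lemma \ref{per}. In (ii), convergence of $\{r_n\}$ to a $q$-cycle $(c_0,\ldots,c_{q-1})$ forces $\{x_n\}$ to converge to the same cycle, and together with $\alpha_n\to\alpha\ne0$ this gives convergence of $y_n$ to the $q$-cycle with values $\alpha c_{i-1}/c_i$. In (iii), $\alpha_n\to0$ with boundedness of $\{x_n\}$ drives $y_n\to0$ along every subsequence where $x_{n+1}$ stays away from $0$, so the limit set lies in the $x$-axis; no point of that axis is a state of (\ref{rhsc}) because (\ref{rhsc}a) is singular at $y=0$. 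In (iv), eventually $p$-periodic $\{\alpha_n\}$ and eventually $q$-periodic $\{x_n\}$ combine through the passive formula to yield an eventually $\operatorname{lcm}(p,q)$-periodic $\{y_n\}$ by the counting argument that underlies Lemma \ref{per}(a).

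The main obstacle is (v). The plan is to let $\Sigma$ be a scrambled subset of the invariant core interval for the logistic map (available for $3.83<b<4$ by Li-Yorke together with Sharkovski), and pull it back to an uncountable set $S$ of planar initial data via $(x_0,y_0)\mapsto\alpha_0x_0/y_0$. For two points in $S$ with distinct $r_0,r_0'\in\Sigma$, scramblingness gives $\limsup|r_n-r_n'|>0$ and hence $\limsup|x_n-x_n'|>0$; it also gives a subsequence $n_k$ along which $|r_{n_k}-r_{n_k}'|\to0$, and continuity of the logistic map on the compact invariant interval propagates this automatically to $|r_{n_k+1}-r_{n_k+1}'|\to0$. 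This makes both $|x_{n_k+1}-x_{n_k+1}'|$ and $|x_{n_k+2}-x_{n_k+2}'|$ vanish, and the identity
\[
y_n-y_n'=\alpha_n\cdot\frac{x_n(x_{n+1}'-x_{n+1})+x_{n+1}(x_n-x_n')}{x_{n+1}x_{n+1}'}
\]
then gives $|y_{n_k+1}-y_{n_k+1}'|\to0$ provided $|x_{n_k+2}|,|x_{n_k+2}'|$ admit a uniform positive lower bound. Securing this bound is the technical pinch point; my plan is to thin $n_k$ further using the fact that chaotic logistic orbits spend a positive density of time in compact subintervals away from $0$, so that after a sub-subsequence extraction the denominators are controlled and Definition \ref{chs} is verified.

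For part (b), if $r_0=\alpha_0x_0/y_0$ lies outside the closed invariant core interval and is not one of $0,\pm b/a$, then under the conjugacy $t_0\notin[0,1]$ with $t_0\ne0,1$; a one-line induction on $t_{n+1}=bt_n(1-t_n)$ shows $t_n\to-\infty$, hence $|r_n|\to\infty$ and $|x_{n+1}|\to\infty$, so the orbit is unbounded. The excluded exceptional values are precisely the preimages of $0$ under one iteration of the core (namely $-b/a$) together with the boundary point $b/a$ itself; discarding them keeps the orbit away from the singularities of (\ref{rhsc}) and ensures that the escape to infinity actually takes place.
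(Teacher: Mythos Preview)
Your overall strategy coincides with the paper's: fold to the autonomous core $r_{n+1}=ar_n^2+br_n$, transport logistic dynamics via $t_n=-ar_n/b$, and reconstruct $y_n$ through the passive equation. The one ingredient you are missing---and it recurs as the unfilled gap in (i), the subsequence hedging in (iii), and the ``technical pinch point'' in (v)---is a uniform quantitative bound replacing your qualitative ``control $x_{n+1}$ away from $0$.'' The paper supplies it at the outset of the proof: writing $\mu(r)=ar^2+br$ with maximum $\mu_{\max}=-b^2/4a$, every $r_n$ with $n\ge1$ satisfies $r_n\le\mu_{\max}$, whence
\[
\frac{r_{n-1}}{r_n}=\frac{1}{ar_{n-1}+b}\le\frac{1}{a\mu_{\max}+b}=\frac{4}{b(4-b)}=\frac{\mu_{\max}}{\mu(\mu_{\max})}\qquad(n\ge2),
\]
giving $|y_n|\le\dfrac{\mu_{\max}}{\mu(\mu_{\max})}\,|\alpha_n|$ and settling boundedness in (i). The same inequality makes (iii) immediate with no subsequence restriction.

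For (v) the paper's proof is a two-line remark: chaos in $\{x_n\}$ together with boundedness of $\{y_n\}$ already yields chaos of the planar orbit. The justification you are working hard to manufacture is hidden in the passive equation itself: since $x_{n+1}=ax_n^{2}+bx_n$ for $n\ge1$, one has
\[
y_n=\frac{\alpha_n x_n}{x_{n+1}}=\frac{\alpha_n}{ax_n+b},
\]
and the bound $ax_n+b\ge b(4-b)/4>0$ above makes $x\mapsto 1/(ax+b)$ uniformly Lipschitz on the range of $\{x_n\}_{n\ge2}$. Thus whenever $|x_{n_k}-x'_{n_k}|\to0$ one obtains $|y_{n_k}-y'_{n_k}|\to0$ along the \emph{same} subsequence; no further thinning or density-of-visits argument is needed, and the Li--Yorke $\liminf$ condition for the planar system follows directly from that of the core. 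Your treatments of (ii), (iv) and (b) match the paper's.
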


\begin{proof}
(a) We prove the case $a<0$ here and leave out the analogous arguments for the
case $a>0.$

(i) Let $\alpha_{0}x_{0}/y_{0}=r_{0}\in\left(  0,-b/a\right)  .$ The critical
point of $\mu(r)=ar^{2}+br$ at $r=-b/2a$ yields the maximum value $\mu_{\max
}=-b^{2}/4a$. It follows that%
\[
0<-\frac{b^{3}}{4a}\left(  1-\frac{b}{4}\right)  =\mu(\mu_{\max})\leq
r_{n}\leq\mu_{\max}<-\frac{b}{a}%
\]
for all $n$ sufficiently large. In particular, $x_{n}=r_{n-1}$ does not
approach 0 so $y_{n}=\alpha_{n}x_{n}/x_{n+1}$ is well-defined. Further, since%
\[
\left\vert y_{n}\right\vert =\frac{|\alpha_{n}|x_{n}}{x_{n+1}}=\frac{r_{n-1}%
}{r_{n}}|\alpha_{n}|\leq\frac{\mu_{\max}}{\mu(\mu_{\max})}\left\vert
\alpha_{n}\right\vert =\frac{16}{b^{2}(4-b)}\left\vert \alpha_{n}\right\vert
\]
it follows that the orbit $\{(x_{n},y_{n})\}$ is bounded if $\{\alpha_{n}\}$ is.

(ii) Since $x_{n}=r_{n-1}$ for $n\geq1$ if $\{r_{n}\}$ converges to a
$q$-cycle in $\left(  0,-b/a\right)  $ then $\{x_{n}\}$ converges to the same
$q$-cycle (with a phase shift), say, $\lim_{n\rightarrow\infty}|x_{n}-\xi
_{n}|=0$ where $\{\xi_{n}\}$ is a $q$-cycle in the interval $[\mu(\mu_{\max
}),\mu_{\max}]$. Then $\xi_{n+q}/\xi_{n+1+q}=\xi_{n}/\xi_{n+1}$ for all $n$ so
$\{\xi_{n}/\xi_{n+1}\}$ has period $q$ and%
\begin{align*}
\left\vert \frac{x_{n}}{x_{n+1}}-\frac{\xi_{n}}{\xi_{n+1}}\right\vert  &
\leq\frac{1}{x_{n+1}\xi_{n+1}}\left(  \xi_{n+1}|x_{n}-\xi_{n}|+\xi_{n}%
|x_{n+1}-\xi_{n+1}|\right) \\
&  \leq\frac{\mu_{\max}}{\mu(\mu_{\max})^{2}}\left(  |x_{n}-\xi_{n}%
|+|x_{n+1}-\xi_{n+1}|\right)
\end{align*}

Thus $\left\{  x_{n}/x_{n+1}\right\}  $ converges to the periodic sequence
$\{\xi_{n}/\xi_{n+1}\}$ with period $q$. Since%
\begin{align*}
\left\vert y_{n}-\frac{\alpha\xi_{n}}{\xi_{n+1}}\right\vert  &  \leq\left\vert
\frac{\alpha_{n}x_{n}}{x_{n+1}}-\frac{\alpha_{n}\xi_{n}}{\xi_{n+1}}\right\vert
+\left\vert \frac{\alpha_{n}\xi_{n}}{\xi_{n+1}}-\frac{\alpha\xi_{n}}{\xi
_{n+1}}\right\vert \\
&  \leq|\alpha_{n}|\left\vert \frac{x_{n}}{x_{n+1}}-\frac{\xi_{n}}{\xi_{n+1}%
}\right\vert +|\alpha_{n}-\alpha|\frac{\mu_{\max}}{\mu(\mu_{\max})}%
\end{align*}
it follows that $\{y_{n}\}$ converges to the sequence $\{\alpha\xi_{n}%
/\xi_{n+1}\}$ which has period $q.$ Hence, the orbit $\{(x_{n},y_{n})\}$
converges to a sequence with period $q$.

(iii) By (i) above, $\mu(\mu_{\max})\leq x_{n}/x_{n+1}\leq\mu_{\max}$ so
$\lim_{n\rightarrow\infty}y_{n}=0$ and the limit set of $\{(x_{n},y_{n})\}$ is
contained in the x-axis. If $\{r_{n}\}$ converges to a cycle or is chaotic
then so is $\{x_{n}\}$ and the same behavior is exhibited by $\{(x_{n}%
,y_{n})\}$ as it approaches the x-axis. The limit set in the x-axis may be
finite or infinite depending on whether the limit of $\{x_{n}\}$ is periodic
or not. However, the limit set itself cannot be a solution of the system where
$y_{n}\not=0$ must hold for all $n$.

(iv) Suppose that $\{r_{n}\}$ converges to a $q$-cycle. Then $\{x_{n}\}$
converges to a $q$-cycle $\{\xi_{n}\}$ in the interval $[\mu(\mu_{\max}%
),\mu_{\max}].$ As in (ii), $\left\{  x_{n}/x_{n+1}\right\}  $ converges to
the periodic sequence $\{\xi_{n}/\xi_{n+1}\}$ with period $q.$ If
$\{\alpha_{n}\}$ converges to a sequence $\{\alpha_{n}^{\ast}\}$ of period $p$
then by Lemma \ref{per} $\{\alpha_{n}^{\ast}\xi_{n}/\xi_{n+1}\}$ has period
$\operatorname{lcm}(p,q)$ and
\begin{align*}
\left\vert y_{n}-\frac{\alpha_{n}^{\ast}\xi_{n}}{\xi_{n+1}}\right\vert  &
\leq\left\vert \frac{\alpha_{n}x_{n}}{x_{n+1}}-\frac{\alpha_{n}\xi_{n}}%
{\xi_{n+1}}\right\vert +\left\vert \frac{\alpha_{n}\xi_{n}}{\xi_{n+1}}%
-\frac{\alpha_{n}^{\ast}\xi_{n}}{\xi_{n+1}}\right\vert \\
&  \leq|\alpha_{n}|\left\vert \frac{x_{n}}{x_{n+1}}-\frac{\xi_{n}}{\xi_{n+1}%
}\right\vert +|\alpha_{n}-\alpha_{n}^{\ast}|\frac{\mu_{\max}}{\mu(\mu_{\max})}%
\end{align*}

Therefore, $\{y_{n}\}$ converges to the sequence $\{\alpha_{n}^{\ast}\xi
_{n}/\xi_{n+1}\}$ with period $\operatorname{lcm}(p,q).$ Hence, the orbit
$\{(x_{n},y_{n})\}$ converges to a sequence with period $\operatorname{lcm}%
(p,q)$.

(v) If $\{r_{n}\}$ is chaotic then so is $\{x_{n}\}.$ If $\{\alpha_{n}\}$ is
bounded then $\{y_{n}\}$ is also bounded since $x_{n}\in$ $[\mu(\mu_{\max
}),\mu_{\max}]$ for all $n\geq0.$ Therefore, regardless of the nature of the
behavior of $\{y_{n}\},$ the orbit $\{(x_{n},y_{n})\}$ is chaotic in the sense
of Definition \ref{chs}.

(b) If $(x_{0},y_{0})$ is such that $\alpha_{0}x_{0}/y_{0}$ does not satisfy
(\ref{crh1}) and $\alpha_{0}x_{0}/y_{0}\not =0,\pm b/a$ then the solution
$r_{n}$ of (\ref{o1q}) is unbounded. Thus the sequence $\{x_{n}\}$ is also
unbounded and it follows that the orbit $\{(x_{n},y_{n})\}$ is unbounded,
regardless of the nature of $\{y_{n}\}$.
\end{proof}

An example of a system to which the preceding result applies is the following%
\begin{align*}
x_{n+1}  &  =\frac{(-1)^{n}x_{n}}{y_{n}}\\
y_{n+1}  &  =\frac{-y_{n}}{ax_{n}+b(-1)^{n}y_{n}}%
\end{align*}
where $a,b$ are real numbers such that $a\not =0.$ Note that this system does
not converge to an autonomous system though it is periodic. A more comprehensive 
study of \ref{rh} in the future through folding and (\ref{rhc}) may reveal
additional interesting possibilities.

\begin{remark}
Certain exceptional solutions of (\ref{rhsc}) cannot be derived from the
folding. In particular, if $x_{0}=0$ and $y_{0}\not =0$ then $x_{n}=0$ for all
$n$ so that $y_{n}=\beta_{n}^{\prime}/B_{n}=\alpha_{n+1}/b.$ Thus the sequence
$\{(0,\alpha_{n+1}/b)\}$ is an orbit of (\ref{rhsc}) that cannot be obtained
from the ratio $\alpha_{n}x_{n}/x_{n+1}$ in the passive equation. This orbit
is unstable if $b>1$, a parameter range for which 0 is unstable in (\ref{o1q})
but if $0<b\leq1$ then it attracts all orbits of the system with $\alpha
_{0}x_{0}/y_{0}$ as given in (\ref{crh1}).
\end{remark}

\section{An inverse problem and more rational systems\label{inv}}

Folding a given nonlinear system into a higher order equation does not always
simplify the study of solutions. From a practical point of view, a significant
gain in terms of simplifying the analysis of solutions is desirable. This was
the case in the previous section where the folding had a one-dimensional
structure. In this section, we determine and study classes of difference
systems that fold to equations of order 1 or 2 with known properties. We start
with one of the two equations of the system, say, the one given by $f$ along
with a known function $\phi$ that defines a second-order equation with desired
properties. Then a function $g$ is determined with the property that the
system with components $f$ and $g$ folds to an equation of order 2 defined by
$\phi.$ 

This process is indeed an inverse of folding in the sense that the resulting system
with $f$ and $g$ is a (non-standard) \textit{unfolding} of the equation of 
order 2 that is defined by the function $\phi$. In the autonomous case, if 
$f(u,v)=v$ then $g=\phi$ and we obtain a standard unfolding.

Using a rational function $f$ the above unfolding process leads, in particular, to 
a rediscovery of the rational system discussed in the previous section as a 
non-standard unfolding of the first-order logistic equation. By unfolding other
first or second order difference equations in this way, we discover other rational 
systems that are not homogeneous but which can still be analyzed using the same method.

Suppose that a function $f$ satisfies Definition \ref{smsl}. By (\ref{se2})
the following
\[
f(n+1,w,g(n,u,h(n,u,w)))=\phi(n,u,w)
\]
is a function of $n,u,w.$ Since $f$ is semi-invertible, once again from
Definition \ref{smsl} we obtain%
\begin{equation}
g(n,u,h(n,u,w))=h(n+1,w,\phi(n,u,w)) \label{idc}%
\end{equation}

Now, suppose that $\phi(n,u,w)$ is prescribed on a set $\mathbb{N}%
_{0}\mathbb{\times}M^{\prime}$ where $M^{\prime}\subset S\times S$ and we seek
$g$ that satisfies (\ref{idc}). Assume that a subset $M$ of $D$ exists with
the property that $f(\mathbb{N}_{0}\mathbb{\times}M)\times\phi(\mathbb{N}%
_{0}\mathbb{\times}M^{\prime})\subset M^{\prime}.$ For $(n,u,v)\in
\mathbb{N}_{0}\mathbb{\times}M$ define%
\begin{equation}
g(n,u,v)=h(n+1,f(n,u,v),\phi(n,u,f(n,u,v))) \label{idc1}%
\end{equation}

In particular, if $v\in h(\mathbb{N}_{0}\mathbb{\times}M^{\prime})$ then $g$
above satisfies (\ref{idc1}). These observations establish the following result.

\begin{theorem}
\label{idn}Let $f$ be a semi-invertible function with $h$ given by Definition
\ref{smsl}. Further, let $\phi$ be a given function on $\mathbb{N}%
_{0}\mathbb{\times}M^{\prime}$. If $g$ is given by (\ref{idc1}) then
(\ref{s1}) folds to the difference equation $s_{n+2}=\phi(n,s_{n},s_{n+1})$
plus a passive equation.
\end{theorem}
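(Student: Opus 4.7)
The plan is to invoke Theorem \ref{A} directly and then verify that the prescribed $g$ collapses the resulting core function into $\phi$. Since $f$ is semi-invertible on $M$ with semi-inversion $h$, Theorem \ref{A} tells us that (\ref{s1}) folds to a passive equation $y_n = h(n, x_n, x_{n+1})$ together with a core equation whose right-hand side is supplied by formula (\ref{se2}):
\[
\tilde\phi(n, u, w) = f(n+1, w, g(n, u, h(n, u, w))).
\]
Thus the entire theorem reduces to verifying the single identity $\tilde\phi = \phi$ on $\mathbb{N}_0 \times M'$.

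To establish this identity I would start from (\ref{idc1}) with $v := h(n, u, w)$. The semi-inversion relation $f(n, u, h(n, u, w)) = w$ turns (\ref{idc1}) into
\[
g(n, u, h(n, u, w)) = h(n+1, w, \phi(n, u, w)).
\]
Substituting this back into $\tilde\phi$ gives $\tilde\phi(n, u, w) = f(n+1, w, h(n+1, w, \phi(n, u, w)))$, and applying the semi-inversion relation a second time, now at step $n+1$ with first coordinate $w$, collapses the right-hand side to $\phi(n, u, w)$. This yields the desired core equation $s_{n+2} = \phi(n, s_n, s_{n+1})$, while the passive equation $y_n = h(n, x_n, x_{n+1})$ is the one already provided by Theorem \ref{A}.

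The main obstacle is not the algebra, which is a two-line calculation, but the domain bookkeeping attached to each application of semi-invertibility. The first use requires $(u, h(n, u, w)) \in M$, which is precisely why a set $M \subset D$ is singled out just before (\ref{idc1}); the second use requires $(w, \phi(n, u, w)) \in M'$, which is exactly the content of the containment hypothesis $f(\mathbb{N}_0 \times M) \times \phi(\mathbb{N}_0 \times M') \subset M'$, since $w = f(n, u, h(n, u, w))$ lies in $f(\mathbb{N}_0 \times M)$ and $\phi(n, u, w)$ lies in $\phi(\mathbb{N}_0 \times M')$. Once these two inclusions are recorded, the computation above completes the proof without further estimates.
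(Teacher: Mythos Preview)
Your proposal is correct and follows essentially the same route as the paper: the paper's proof is the short discussion immediately preceding the theorem (culminating in ``These observations establish the following result''), which derives (\ref{idc}) from (\ref{se2}) and then checks that the $g$ of (\ref{idc1}) satisfies (\ref{idc}) upon substituting $v=h(n,u,w)$---exactly the computation you carry out, only you run it in the forward direction by plugging $g$ into (\ref{se2}) and collapsing. Your treatment is in fact more explicit than the paper's, particularly in isolating the two applications of the semi-inversion identity and tying each to the domain hypothesis $f(\mathbb{N}_0\times M)\times\phi(\mathbb{N}_0\times M')\subset M'$; the only caveat is that the identity $f(n,u,h(n,u,w))=w$ follows from Definition~\ref{smsl} when $(u,w)$ lies in the image of $(u,v)\mapsto(u,f(n,u,v))$ over $M$ (which is automatic along orbits), rather than from the bare inclusion $(u,h(n,u,w))\in M$ you cite, but the paper is no more careful on this point.
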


In separable cases, explicit expressions are possible with the aid of
(\ref{spsh}). Note that semilinear systems are included in the next result.

\begin{corollary}
Let $(G,\ast)$ be a nontrivial group and $f(n,u,v)=f_{1}(n,u)\ast f_{2}(n,v)$
be separable on $G\times G$ with $f_{2}$ a bijection. If $\phi$ is a given
function on $\mathbb{N}_{0}\mathbb{\times}G\times G$ and $g$ is given by
\[
g(n,u,v)=f_{2}^{-1}(n+1,\left[  f_{1}(n+1,f_{1}(n,u)\ast f_{2}(n,v))\right]
^{-1}\ast\phi(n,u,f_{1}(n,u)\ast f_{2}(n,v)))
\]
then (\ref{s1}) folds to the difference equation $s_{n+2}=\phi(n,s_{n}%
,s_{n+1})$ plus a passive equation.
\end{corollary}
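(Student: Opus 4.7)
The plan is to observe that this corollary is essentially a specialization of Theorem \ref{idn} to the setting of Proposition \ref{sep}, so the work amounts to substituting the explicit semi-inversion formula (\ref{spsh}) into the unfolding formula (\ref{idc1}) and recognizing that the result matches the displayed definition of $g$.

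First, I would invoke Proposition \ref{sep}: since $f(n,u,v)=f_{1}(n,u)\ast f_{2}(n,v)$ is separable on $G\times G$ and $f_{2}(n,\cdot)$ is a bijection for every $n$, the function $f$ is semi-invertible on the full domain $G\times G$, and its unique semi-inversion is given by
\[
h(n,u,w)=f_{2}^{-1}(n,[f_{1}(n,u)]^{-1}\ast w).
\]
Thus $f$ satisfies the hypotheses of Theorem \ref{idn} with $M=M'=G\times G$, and one may freely apply that theorem to any prescribed $\phi$ on $\mathbb{N}_{0}\times G\times G$.

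Next, I would plug $h$ into (\ref{idc1}) from Theorem \ref{idn}, which reads
\[
g(n,u,v)=h(n+1,f(n,u,v),\phi(n,u,f(n,u,v))).
\]
Writing $w:=f(n,u,v)=f_{1}(n,u)\ast f_{2}(n,v)$ for brevity, the outer application of $h$ at index $n+1$ becomes
\[
h(n+1,w,\phi(n,u,w))=f_{2}^{-1}\!\bigl(n+1,\,[f_{1}(n+1,w)]^{-1}\ast\phi(n,u,w)\bigr),
\]
and substituting $w=f_{1}(n,u)\ast f_{2}(n,v)$ recovers exactly the formula displayed in the corollary. Hence the $g$ defined in the statement coincides with the $g$ produced by (\ref{idc1}), and Theorem \ref{idn} gives that (\ref{s1}) folds to $s_{n+2}=\phi(n,s_{n},s_{n+1})$ together with a passive equation (which here is explicitly the $h$ above).

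There is no real obstacle; the only thing to be careful about is the two distinct uses of the inverse symbol $-1$, one referring to group inversion inside $G$ and the other to the functional inverse $f_{2}^{-1}$ in its first slot, exactly as cautioned in the paragraph preceding Proposition \ref{sep}. Once the formula is checked to be well-defined on the full product $G\times G$, the conclusion follows immediately, and no additional calculation is required beyond bookkeeping of the nested substitutions.
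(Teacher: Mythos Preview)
Your proposal is correct and matches the paper's approach: the corollary is stated in the paper without proof precisely because it is the immediate specialization of Theorem \ref{idn} to the separable setting, obtained by substituting the explicit semi-inversion $h$ from (\ref{spsh}) (via Proposition \ref{sep}) into formula (\ref{idc1}). Your write-up supplies exactly that bookkeeping, including the caution about the two meanings of the inverse symbol, so nothing more is needed.
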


The next result yields a class of systems that actually reduce to first-order
difference equations.

\begin{corollary}
\label{o1}Assume that $f,h$ satisfy the hypotheses of Theorem \ref{idn} and
let $\phi(n,\cdot)$ be a function of one variable for each $n$. If%
\[
g(n,u,v)=h(n+1,f(n,u,v),\phi(n,f(n,u,v)))
\]
then (\ref{s1}) folds to the difference equation $s_{n+2}=\phi(n,s_{n+1})$
with order 1 plus a passive equation.
\end{corollary}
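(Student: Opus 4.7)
The plan is to deduce this corollary directly from Theorem \ref{idn} by specializing the function $\phi$ to depend only on its last coordinate. Concretely, I would set up the proof by observing that $\phi(n,\cdot)$ being a function of one variable means that the three-variable function appearing in Theorem \ref{idn}, which I will temporarily denote $\tilde\phi(n,u,w)$, satisfies $\tilde\phi(n,u,w)=\phi(n,w)$ with no $u$-dependence.

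Next, I would substitute this into the formula (\ref{idc1}) that defines the complementary component $g$. Since the middle argument $u$ in $\tilde\phi(n,u,f(n,u,v))$ drops out, (\ref{idc1}) collapses precisely to
\[
g(n,u,v)=h(n+1,f(n,u,v),\phi(n,f(n,u,v))),
\]
which matches the formula prescribed in the statement. Theorem \ref{idn} then applies verbatim and tells us that the system (\ref{s1}) with this choice of $g$ folds to the scalar equation $s_{n+2}=\tilde\phi(n,s_n,s_{n+1})$ together with the passive equation. Substituting back $\tilde\phi(n,s_n,s_{n+1})=\phi(n,s_{n+1})$ yields the claimed core equation $s_{n+2}=\phi(n,s_{n+1})$.

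To justify calling the resulting core equation ``of order $1$'', I would conclude with the elementary remark that the index shift $r_n=s_{n+1}$ converts $s_{n+2}=\phi(n,s_{n+1})$ into the first-order recursion $r_{n+1}=\phi(n,r_n)$, so only a single initial value (namely $s_1=f(0,x_0,y_0)$) is needed to generate the whole sequence; the value $s_0=x_0$ is never fed back into the dynamics and is only used through the passive equation. This is exactly what occurred in Section~3, where the specialization (\ref{rsc}) produced the first-order quadratic core (\ref{o1q}).

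There is essentially no hard step here: the corollary is a clean specialization of Theorem \ref{idn}, and the only thing to be careful about is the bookkeeping around the domain condition $f(\mathbb{N}_0\times M)\times\phi(\mathbb{N}_0\times M')\subset M'$ inherited from that theorem, which I would note holds automatically in our setting since the last slot of $\phi$ still ranges over the same set as before and the first slot has simply been suppressed.
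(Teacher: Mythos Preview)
Your proposal is correct and matches the paper's intent: the paper states this corollary without proof, treating it as an immediate specialization of Theorem \ref{idn} obtained by taking the two-variable $\phi(n,u,w)$ to be independent of $u$, exactly as you describe. Your remark about the index shift $r_n=s_{n+1}$ is also how the paper justifies the ``order 1'' claim in the surrounding discussion (cf.\ (\ref{o1q})).
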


We may use the above corollary to rediscover the rational system discussed in
the previous section. Let $f(n,u,v)=\alpha_{n}u/v$ as in (\ref{rha}) subject
to (\ref{rsc}). With $\phi(n,u,w)=aw^{2}+bw$ that defines (\ref{hsc}) we
obtain, using Corollary \ref{o1}%
\[
g(n,u,v)=\frac{\alpha_{n+1}\alpha_{n}u}{v[a(\alpha_{n}u)^{2}/v^{2}+ba_{n}%
u/v]}=\frac{\alpha_{n+1}v}{a\alpha_{n}u+bv}=\frac{(\alpha_{n+1}/a\alpha_{n}%
)v}{u+(b/a\alpha_{n})v}%
\]

Using the substitutions (\ref{ac1}) we obtain the homogeneous system
(\ref{rhsc}). The next result yields a class of systems that fold to second-order
difference equations whose orbits are determined by first-order equations.

\begin{corollary}
\label{cdn}Assume that $f,h$ satisfy the hypotheses of Theorem \ref{idn} and
let $\phi(n,\cdot)$ be a function of one variable for each $n.$ If%
\begin{equation}
g(n,u,v)=h(n+1,f(n,u,v),\phi(n,u)) \label{ief}%
\end{equation}
then (\ref{s1}) folds to the difference equation $s_{n+2}=\phi(n,s_{n})$ whose
even terms and odd terms are (separately) solutions of the first-order
equation%
\begin{equation}
r_{n+1}=\phi(n,r_{n}). \label{efo1}%
\end{equation}
i.e., $s_{2k}=\phi(2k-2,s_{2k-2})$ and $s_{2k+1}=\phi(2k-1,s_{2k-1})$ for all
$k\geq1$, $s_{0}=x_{0}$ and $s_{1}=f(0,x_{0},y_{0}).$
\end{corollary}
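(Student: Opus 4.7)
The plan is to deduce Corollary \ref{cdn} directly from Theorem \ref{idn} by promoting the one-variable function $\phi(n,\cdot)$ to a two-variable function that simply ignores its third argument, and then reading off the decoupling of parities from the resulting core recursion. First, I would view the given $\phi(n,u)$ as the function $\tilde{\phi}(n,u,w) = \phi(n,u)$ on $\mathbb{N}_0 \times M'$. Substituting $\tilde{\phi}$ into formula (\ref{idc1}) from Theorem \ref{idn} collapses the third slot, yielding
\[
g(n,u,v) \;=\; h(n+1,f(n,u,v),\tilde{\phi}(n,u,f(n,u,v))) \;=\; h(n+1,f(n,u,v),\phi(n,u)),
\]
which is precisely the $g$ of (\ref{ief}). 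Theorem \ref{idn} then immediately delivers the folding: (\ref{s1}) folds to $s_{n+2} = \tilde{\phi}(n,s_n,s_{n+1}) = \phi(n,s_n)$ plus a passive equation, with initial values $s_0 = x_0$ and $s_1 = f(0,x_0,y_0)$ as prescribed by Definition \ref{fld}.

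Next, I would extract the decoupling of even and odd terms from the core recursion $s_{n+2} = \phi(n,s_n)$. Substituting $n = 2k-2$ for $k \geq 1$ gives $s_{2k} = \phi(2k-2,s_{2k-2})$, so the even-indexed subsequence $\{s_{2k}\}_{k \geq 0}$ is generated from $s_0 = x_0$ by successive application of the maps $\phi(0,\cdot), \phi(2,\cdot), \phi(4,\cdot), \ldots$. The analogous substitution $n = 2k-1$ gives $s_{2k+1} = \phi(2k-1,s_{2k-1})$, so the odd-indexed subsequence $\{s_{2k+1}\}_{k \geq 0}$ is generated from $s_1 = f(0,x_0,y_0)$ by $\phi(1,\cdot), \phi(3,\cdot), \phi(5,\cdot), \ldots$. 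In this sense each parity class is a solution of the first-order non-autonomous equation (\ref{efo1}), sampled along its own arithmetic progression of time indices.

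There is essentially no hard step here: the result is a clean specialization of Theorem \ref{idn} combined with the trivial observation that a second-order recursion in which the next term depends only on the term two steps back decouples into two independent first-order recursions on the even- and odd-indexed subsequences. The only point requiring mild care is the notational identification of the one-variable $\phi(n,\cdot)$ with a two-variable function independent of its second slot when invoking the formula (\ref{idc1}); once that identification is made, both the folding and the parity decoupling follow by direct substitution, and no domain complications arise beyond those already guaranteed by the hypotheses of Theorem \ref{idn}.
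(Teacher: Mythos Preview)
Your proposal is correct and is exactly the intended argument: the paper gives no explicit proof of Corollary \ref{cdn}, treating it as an immediate specialization of Theorem \ref{idn} obtained by taking $\phi$ independent of its second state variable, which is precisely what you do. The parity decoupling you spell out is the obvious consequence of the resulting core recursion $s_{n+2}=\phi(n,s_n)$, and your handling of the initial values matches Definition \ref{fld}.
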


As an application, consider the function $f(n,u,v)=\alpha_{n}u/v$ again but
now with $\phi(n,u,w)=au^{2}+bu.$ Then Corollary \ref{cdn} yields%
\[
g(n,u,v)=\frac{\alpha_{n+1}\alpha_{n}u}{v(au^{2}+bu)}=\frac{\alpha_{n+1}%
\alpha_{n}}{v(au+b)}%
\]
which results in the system
\begin{subequations}
\label{rnh}%
\begin{align}
x_{n+1}  &  =\frac{\alpha_{n}x_{n}}{y_{n}}\\
y_{n+1}  &  =\frac{\alpha_{n}\alpha_{n+1}}{(ax_{n}+b)y_{n}}%
\end{align}

The core of its folding is $s_{n+2}=as_{n}^{2}+bs_{n}$ a second-order equation
of type seen in Corollary \ref{cdn}. The even- and odd-indexed terms are
generated by a conjugate of the logistic map so an analysis similar to that of
the previous section may be carried out for the rational system (\ref{rnh}).

The fact that, despite similarities, the folding of (\ref{rnh}) has order 2
whereas that of (\ref{rhsc}) has order 1 has some interesting consequences
about the corresponding systems and their orbits. To be more precise, consider
the autonomous version of (\ref{rhsc}) with $\alpha_{n}=\alpha$ for all $n$
i.e., the system%

\end{subequations}
\begin{subequations}
\label{mhs}%
\begin{align}
x_{n+1} &  =\frac{\alpha x_{n}}{y_{n}}\\
y_{n+1} &  =\frac{\beta y_{n}}{x_{n}+\gamma y_{n}}%
\end{align}
where $\beta=1/a$ and $\gamma=b/a\alpha$ which we compare to the autonomous
version of (\ref{rnh})
\end{subequations}
\begin{subequations}
\label{coch}%
\begin{align}
x_{n+1} &  =\frac{\alpha x_{n}}{y_{n}}\\
y_{n+1} &  =\frac{\beta}{(x_{n}+\gamma)y_{n}}%
\end{align}
with $\alpha_{n}=\alpha$, $\beta=\alpha^{2}/a$ and $\gamma=b/a.$ 

The graph of
a single typical orbit of (\ref{mhs}) that satisfies the hypotheses in Part (v) 
of Theorem \ref{crh} appears in Figure 1. 

\centerline{\includegraphics[width=3.78in,height=3.53in,keepaspectratio]{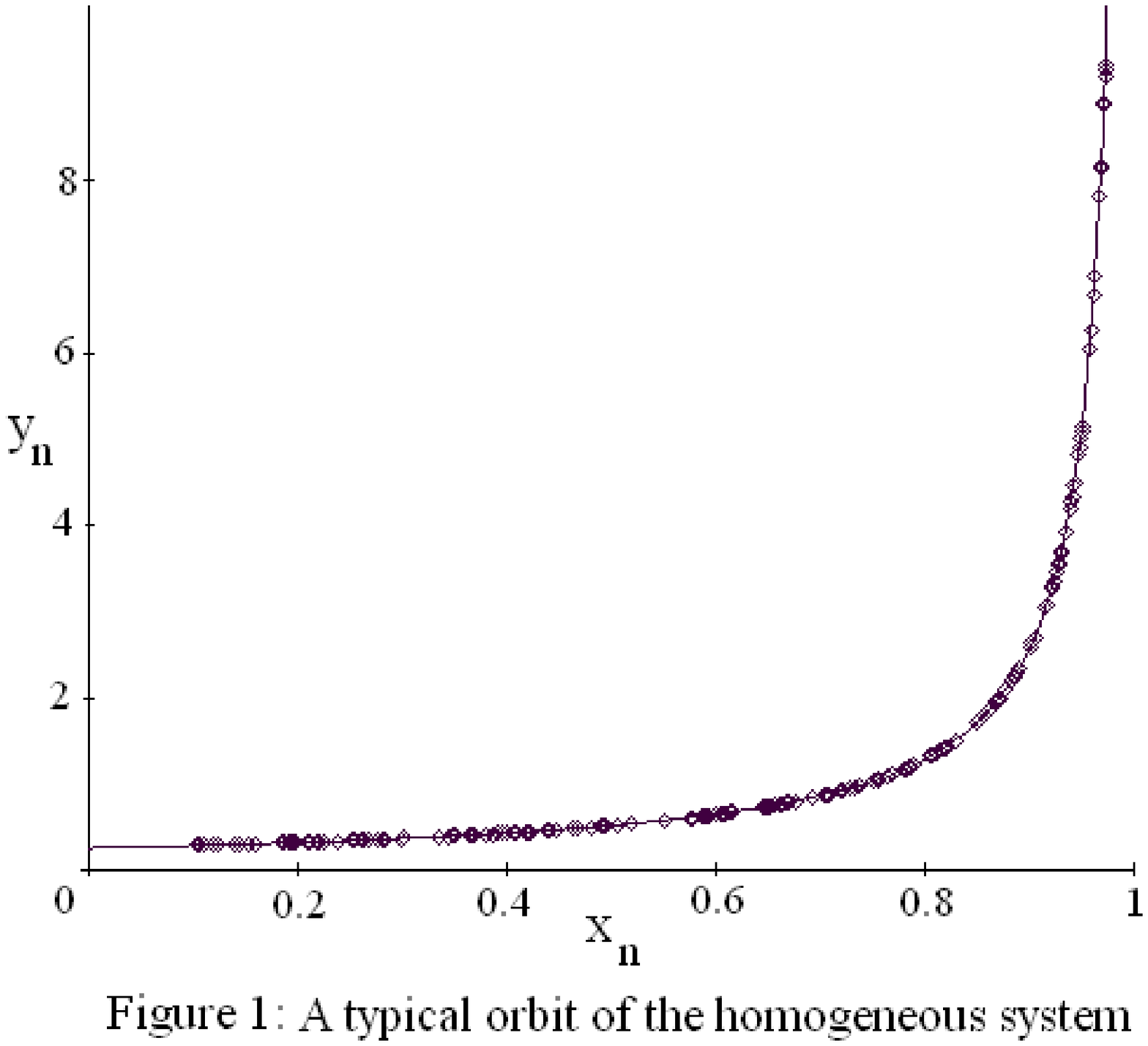}}

We note that the chaotic orbit is in the positive quadrant since $y_{n}>0$ for all $n$ in this case. The one dimensional manifold that contains the orbit is the curve
$y=\alpha/(ax+b)$ which is calculated using (\ref{hsc}) as follows%
\end{subequations}
\[
y_{n}=\frac{\alpha x_{n}}{x_{n+1}}=\frac{\alpha x_{n}}{ax_{n}^{2}+bx_{n}%
}=\frac{\alpha}{ax_{n}+b}.
\]

The graph of a single typical orbit of (\ref{coch}) is shown in Figure 2. The
spread of the orbit in the plane reflects the higher order of the folding in this case.

\centerline{\includegraphics[width=3.69in,height=3.53in,keepaspectratio]{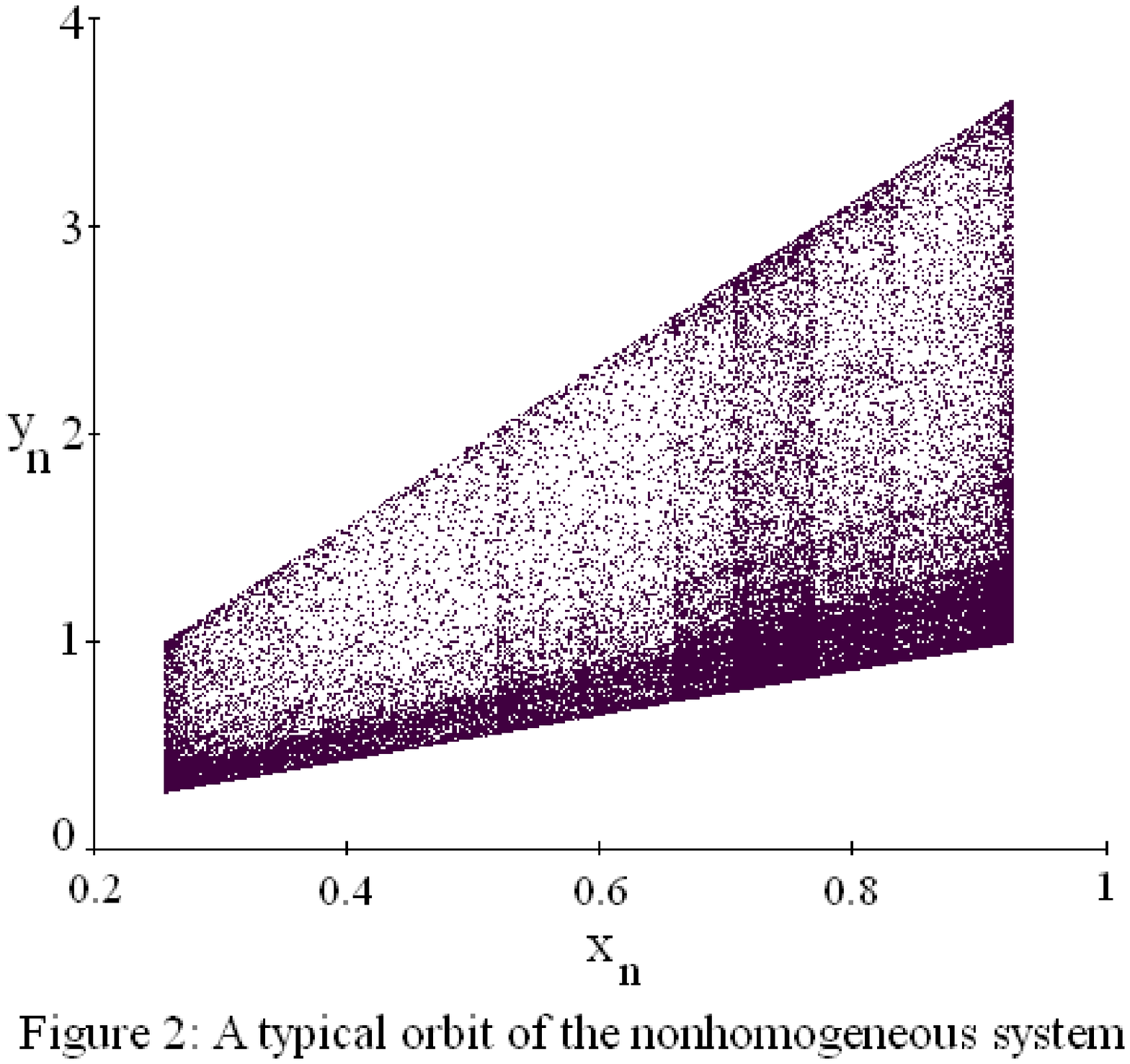}}

Other facts worth mentioning with regard to (\ref{mhs}) and (\ref{coch}) are
that the latter is not homogeneous and semiconjugacy to a known map is not
known for it. Further, the second equation of (\ref{coch}) is not linear-fractional.

The next result concerns systems that fold to autonomous affine
equations of order 2.

\begin{corollary}
\label{lin}Assume that $f,h$ satisfy the hypotheses of Theorem \ref{idn} and
let $\phi(u,w)=au+bw+c$ be an affine function where $|a|+|b|>0$. If%
\[
g(n,u,v)=h(n+1,f(n,u,v),au+bf(n,u,v)+c)
\]
then (\ref{s1}) folds to the difference equation $s_{n+2}=as_{n}+bs_{n+1}+c$
plus a passive equation.
\end{corollary}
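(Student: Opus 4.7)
The plan is to treat this as a direct specialization of Theorem \ref{idn} with the affine choice $\phi(n,u,w)=au+bw+c$ (which happens to be independent of $n$). First I would verify that $\phi$ qualifies as an admissible input to Theorem \ref{idn}: since it is defined on all of $S\times S$ when $S\subset\mathbb{R}$, the set-theoretic hypothesis $f(\mathbb{N}_0\times M)\times\phi(\mathbb{N}_0\times M')\subset M'$ needed to construct $g$ on the correct domain is automatic once we take $M'=S\times S$. The condition $|a|+|b|>0$ plays no role in the derivation itself; it simply excludes the degenerate case $\phi\equiv c$, for which the core equation carries no dynamical content.

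Next I would compute the formula that Theorem \ref{idn} prescribes via (\ref{idc1}). Substituting $\phi(n,u,w)=au+bw+c$ into
\[
g(n,u,v)=h(n+1,f(n,u,v),\phi(n,u,f(n,u,v)))
\]
and evaluating the inner $\phi$ with second argument $w=f(n,u,v)$ gives precisely
\[
g(n,u,v)=h(n+1,f(n,u,v),au+bf(n,u,v)+c),
\]
which matches the expression in the statement. Hence the $g$ defined here coincides with the $g$ produced by Theorem \ref{idn} for this particular $\phi$.

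Applying Theorem \ref{idn} then yields that the system (\ref{s1}) with the components $f$ and this specific $g$ folds to the core equation
\[
s_{n+2}=\phi(n,s_n,s_{n+1})=as_n+bs_{n+1}+c,
\]
together with the passive equation $y_n=h(n,x_n,x_{n+1})$, which is exactly the claim.

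There is no genuine obstacle in this proof; it is a one-line specialization of Theorem \ref{idn}. The only point that warrants any care is tracking the placement of arguments inside the nested application of $h$, $f$, and $\phi$ — an easy mistake would be to write $\phi(n,u,v)$ instead of $\phi(n,u,f(n,u,v))$ in (\ref{idc1}), which would yield a different, incorrect $g$.
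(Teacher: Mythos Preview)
Your proposal is correct and matches the paper's approach: the corollary is stated without proof in the paper, being an immediate specialization of Theorem~\ref{idn} with the particular choice $\phi(n,u,w)=au+bw+c$. Your verification that substituting this $\phi$ into (\ref{idc1}) yields exactly the displayed $g$ is precisely the one-line check the paper leaves implicit.
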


As an application of the above corollary, consider $f(n,u,v)=\alpha_{n}u/v$
together with $\phi(u,w)=au+bw+c.$ Then by Corollary \ref{lin}%
\[
g(n,u,v)=\frac{\alpha_{n+1}\alpha_{n}u}{v[au+b(\alpha_{n}u/v)+c]}=\frac
{\alpha_{n+1}\alpha_{n}u}{auv+cv+b\alpha_{n}}%
\]
corresponding to the following rational system
\begin{subequations}
\label{lna}%
\begin{align}
x_{n+1}  &  =\frac{\alpha_{n}x_{n}}{y_{n}}\\
y_{n+1}  &  =\frac{\alpha_{n}\alpha_{n+1}x_{n}}{\alpha_{n}bx_{n}%
+(ax_{n}+c)y_{n}}%
\end{align}

In the special case where $\alpha_{n}=\alpha$ is a constant and $a=0$ the
above system takes the form%

\end{subequations}
\begin{subequations}
\label{lah}%
\begin{align}
x_{n+1} &  =\frac{\alpha x_{n}}{y_{n}}\\
y_{n+1} &  =\frac{\beta x_{n}}{x_{n}+\gamma y_{n}}%
\end{align}
\end{subequations}
where $\beta=\alpha/b$ and $\gamma=c/\alpha b$. This homogeneous system, which
folds to the affine first-order equation (\ref{ao1}) does not generate complex
behavior, in contrast to (\ref{mhs}), which is also homogeneous. On the other hand, 
if $b=0$ and
$\alpha_{n}=\alpha$ then (\ref{lna}) reduces to the autonomous system
\begin{subequations}
\label{lnh}%
\begin{align}
x_{n+1} &  =\frac{\alpha x_{n}}{y_{n}}\\
y_{n+1} &  =\frac{\beta x_{n}}{(x_{n}+\gamma)y_{n}}%
\end{align}
\end{subequations}
where $\beta$ $=\alpha^{2}/a$ and $\gamma=c/a.$ This system may be compared to
(\ref{coch}) since (\ref{lnh}) is not homogeneous, semiconjugacy to a known
map is not known for it and the second equation of the system is not
linear-fractional. But in contrast to (\ref{coch}), system (\ref{lnh}) folds
to a second-order affine difference equation and thus, cannot generate complex
behavior. In fact, general formulas for the orbits of (\ref{lah}) and
(\ref{lnh}) can be easily obtained in closed form if desired.



\end{document}